\newtheorem{thm}{Theorem}
\newdefinition{rmk}{Remark}
\newproof{pf}{Proof}
\newdefinition{example}{Example}
\newdefinition{definition}{Definition}
\newdefinition{property}{Property}
\newdefinition{problem}{Problem}
\newdefinition{corollary}{Corollary}
\journal{Arkiv}
\begin{document}

\begin{frontmatter}



\title{$C$-eigenvalues intervals for Piezoelectric-type tensors}


\author[label2]{Chaoqian Li}
\ead{lichaoqian@ynu.edu.cn}
\author[label2]{Yaotang Li\corref{cor1}}
\ead{liyaotang@ynu.edu.cn} \cortext[cor1]{Corresponding author.}
\address[label2]{School of Mathematics and Statistics, Yunnan
University, Kunming, P. R. China 650091}

\begin{abstract}
$C$-eigenvalues of piezoelectric-type tensors which are real and always exist, are introduced by Chen et al. \cite{Ch}.
And the largest $C$-eigenvalue for the piezoelectric tensor determines the highest piezoelectric coupling constant.
In this paper, we give two intervals to locate all $C$-eigenvalues for a given Piezoelectric-type tensor.
These intervals provide upper bounds for the largest $C$-eigenvalue.
Numerical examples are also given to show the corresponding results.

\end{abstract}

\begin{keyword}
Piezoelectric tensors, $C$-eigenvalues, Interval.

\MSC[2010] 12E10; 15A18; 15A69
\end{keyword}
\end{frontmatter}


\section{Introduction}
Piezoelectric-type tensors are introduced by Chen et al. in \cite{Ch}
as a subclass of third order tensors which have extensive
applications in physics and engineering \cite{Cu,Ha,Kh,Lo,Ny,Zo}. The class of
Piezoelectric tensors, as the subclass of Piezoelectric-type tensors
of dimension three, plays the key role in Piezoelectric effect and
converse Piezoelectric effect \cite{Ch}.

\begin{definition} \label{Def-main} \cite[Definition 2.1]{Ch}
Let $\mathcal{A}=(a_{ijk})\in \mathbb{R}^{n\times n \times n}$ be a
third-order $n$ dimensional real tensor. If the later two indices of
$\mathcal{A}$ are symmetric, i.e., $a_{ijk}=a_{ikj}$ for all $j \in N$ and
$k\in N$ where $N:=\{1,2\ldots,n\}$, then $\mathcal{A}$ is called a piezoelectric-type tensor.
\end{definition}

To explore more properties related to piezoelectric effect and
converse piezoelectric effect in solid crystal, Chen et al. in
\cite{Ch} introduced  $C$-eigenvalues and $C$-eigenvectors for
Piezoelectric-type tensors, and shown that the largest
$C$-eigenvalue corresponds to the electric displacement vector with
the largest $2$-norm in the piezoelectric electronic effect under
unit uniaxial stress \cite{Ch,Cu,Zh}.

\begin{definition} \label{Def-main1} \cite[Definition 2.2]{Ch}
Let $\mathcal{A}=(a_{ijk})\in \mathbb{R}^{n\times n \times n}$ be a
piezoelectric-type tensor. If there exist a scalar $\lambda\in
\mathbb{R}$, vectors $x\in \mathbb{R}^n$ and $y\in \mathbb{R}^n$
satisfying the following system
\begin{equation} \label{Cha_equ} \mathcal{A}yy=\lambda x,~  x\mathcal{A}y=\lambda y,~ x^Tx=1~and
~y^Ty=1,\end{equation} where $\mathcal{A}yy \in \mathbb{R}^n$ and $x\mathcal{A}y \in \mathbb{R}^n$ with the $i$-th entry
\[ (\mathcal{A}yy)_i=\sum\limits_{j,k\in N}a_{ijk}y_jy_k, ~and ~ (x\mathcal{A}y)_i= \sum\limits_{j,k\in N}a_{jki}x_jy_k,\]
respectively, then $ \lambda$ is called a
$C$-eigenvalue of $\mathcal{A} $, $x$ and $y$ are called associated
left and right $C$-eigenvectors, respectively.
\end{definition}

For $C$-eigenvalues and associated left and right $C$-eigenvectors of
a piezoelectric-type tensor, Chen et al. in \cite{Ch} also provided
several related results, such as:

\begin{property} For a
piezoelectric-type tensor $\mathcal{A}$, there always exist
$C$-eigenvalues of $\mathcal{A}$ and associated left and right
$C$-eigenvectors. \end{property}

\begin{property} \label{pro2} Suppose that $\lambda$, $x$ and  $y$ are a
$C$-eigenvalue and its associated left and right $C$-eigenvectors of
a piezoelectric-type tensor $\mathcal{A}$. Then
\[\lambda =x\mathcal{A}yy,\]
where $x\mathcal{A}yy=\sum\limits_{i,j,k\in N}a_{ijk}x_iy_jy_k.$ Furthermore, $(\lambda, x,-y)$, $(-\lambda, -x,y)$ and
$(-\lambda, -x,-y)$ are also $C$-eigenvalues and their associated
$C$-eigenvectors of  $\mathcal{A}$.
\end{property}

\begin{property} \label{pro3} Suppose that $\lambda^*$ is the largest $C$-eigenvalue of a piezoelectric-type tensor
$\mathcal{A}$. Then
\[\lambda^* =\max\left\{x\mathcal{A}yy:~x^Tx=1, y^Ty=1\right\}.\]
\end{property}

Property \ref{pro2} and Property \ref{pro3} provide theoretically
the form to determine  $C$-eigenvalues or the largest $C$-eigenvalue
$\lambda^*$ of $\mathcal{A}$, However, it is difficult  to compute
them in practice because determining $x$ and $y$ is not easy. So, we
in this paper give some intervals to locate all  $C$-eigenvalues of
a piezoelectric-type tensor, and then give some upper bounds  for
the the largest $C$-eigenvalue. This can provide more information
before calculating them out.

\section{Main results}
In this section, we give two intervals to locate all $C$-eigenvalues
of a piezoelectric-type tensor. And the comparison of these two
intervals are also established.

\begin{thm} \label{Thm_main1}
Let $\mathcal{A}=(a_{ijk})\in \mathbb{R}^{n\times n \times n}$ be a
piezoelectric-type tensor, and $\lambda$ be a $C$-eigenvalue of
$\mathcal{A}$. Then
\begin{equation}\label{eq2.1} \lambda \in  \left[-\rho,~\rho \right], \end{equation}
where
\[ \rho :=\max\limits_{i,j \in N}\left( R_i^{(1)}(\mathcal{A})
R_j^{(3)}(\mathcal{A})\right)^{\frac{1}{2}},\]
$R_i^{(1)}(\mathcal{A}):=\sum\limits_{l,k\in N} |a_{ilk}|$ and
$R_j^{(3)}(\mathcal{A}):=\sum\limits_{l,k\in N} |a_{lkj}|$.
\end{thm}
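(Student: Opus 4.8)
The plan is to extract two scalar inequalities from the defining system \eqref{Cha_equ} by isolating the components of $x$ and $y$ of largest modulus, and then to combine them multiplicatively so that the product $R_i^{(1)}(\mathcal{A})R_j^{(3)}(\mathcal{A})$ emerges. First I would fix indices $p,q\in N$ with $|x_p|=\max_{i\in N}|x_i|$ and $|y_q|=\max_{j\in N}|y_j|$. Since $x^Tx=1$ and $y^Ty=1$, both vectors are nonzero, so $|x_p|>0$ and $|y_q|>0$; moreover $|x_p|\le 1$ and $|y_q|\le 1$, which will be used at the end.

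Next, reading off the $p$-th component of $\mathcal{A}yy=\lambda x$ gives $\lambda x_p=\sum_{j,k\in N}a_{pjk}y_jy_k$; taking absolute values and bounding $|y_j|\,|y_k|\le |y_q|^2$ yields $|\lambda|\,|x_p|\le |y_q|^2 R_p^{(1)}(\mathcal{A})$. Similarly, reading off the $q$-th component of $x\mathcal{A}y=\lambda y$ gives $\lambda y_q=\sum_{j,k\in N}a_{jkq}x_jy_k$; taking absolute values and bounding $|x_j|\le|x_p|$ and $|y_k|\le|y_q|$ yields $|\lambda|\,|y_q|\le |x_p|\,|y_q|\,R_q^{(3)}(\mathcal{A})$.

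Then I would multiply these two inequalities and cancel the common positive factor $|x_p|\,|y_q|$, obtaining $|\lambda|^2\le |y_q|^2\,R_p^{(1)}(\mathcal{A})\,R_q^{(3)}(\mathcal{A})$. Using $|y_q|\le 1$ together with the definition of $\rho$ gives $|\lambda|^2\le R_p^{(1)}(\mathcal{A})\,R_q^{(3)}(\mathcal{A})\le \rho^2$, whence $\lambda\in[-\rho,\rho]$, which is \eqref{eq2.1}.

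The triangle-inequality bookkeeping is routine; the point that needs care is the direction of the combination. I would expect the main obstacle to be pairing the two equations correctly, so that after multiplying, the factor $|x_p|$ (whose magnitude we do not control beyond $|x_p|\le 1$) cancels cleanly and only the harmless factor $|y_q|^2\le 1$ survives. Pairing them the other way would leave an uncontrolled positive power of $|x_p|$ in the bound and fail to produce the symmetric product $R_p^{(1)}(\mathcal{A})\,R_q^{(3)}(\mathcal{A})$.
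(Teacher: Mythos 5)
Your proposal is correct and follows essentially the same route as the paper: isolate the maximal-modulus components $x_p$, $y_q$, bound the $p$-th equation of $\mathcal{A}yy=\lambda x$ and the $q$-th equation of $x\mathcal{A}y=\lambda y$ by $R_p^{(1)}(\mathcal{A})$ and $R_q^{(3)}(\mathcal{A})$ respectively, multiply, and cancel $|x_p||y_q|>0$. The only cosmetic difference is that you keep the factors of $|y_q|$ until after the multiplication and cancellation, whereas the paper discards one factor of $|y_q|\le 1$ in each inequality beforehand; both orderings are valid and yield the same bound.
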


\begin{proof}Suppose that $x=(x_1,x_2,\ldots,x_n)^T$ and
$y=(y_1,y_2,\ldots,y_n)^T$ are left and right $C$-eigenvectors
corresponding to $\lambda$ with $x^Tx=1$ and $y^Ty=1$. Let
\[ |x_p|=\max\limits_{i\in N} |x_i|,~ and~  |y_q|=\max\limits_{i\in N} |y_i|. \]
Then $0<|x_p|\leq 1 $ and  $0<|y_q|\leq 1$ because $x^Tx=1$ and
$y^Ty=1$.

By considering  the $p$-th equation of $\mathcal{A}yy=\lambda x$ in
(\ref{Cha_equ}), we have
\begin{equation}\label{eq2.2}  \lambda x_p = \sum\limits_{j,k\in N} a_{pjk} y_j y_k,\end{equation}
and
\begin{eqnarray*} |\lambda| |x_p| &\leq& \sum\limits_{j,k\in N} |a_{pjk}| |y_j|
|y_k|\\
& \leq & \sum\limits_{j,k\in N} |a_{pjk}| |y_q| |y_q|\\
& \leq & \sum\limits_{j,k\in N} |a_{pjk}| |y_q|.~ (by ~|y_q|\leq 1)
\end{eqnarray*}
Hence \begin{equation}\label{eq2.3} |\lambda| |x_p| \leq
R_p^{(1)}(\mathcal{A})|y_q|.\end{equation}

On the other hand, by considering  the $q$-th equation of
$x\mathcal{A}y=\lambda y$ in (\ref{Cha_equ}), we have
\begin{equation}\label{eq2.4}  \lambda y_q = \sum\limits_{i,j\in N} a_{ijq} x_i y_j,\end{equation}
and
\begin{eqnarray*} |\lambda| |y_q| &\leq& \sum\limits_{i,j\in N} |a_{ijq}| |x_i|
|y_j|\\
& \leq & \sum\limits_{i,j\in N} |a_{ijq}| |x_p|
|y_q|\\
& \leq & \sum\limits_{i,j\in N} |a_{ijq}| |x_p|.~(by ~|y_q|\leq 1) \\
\end{eqnarray*}
Hence \begin{equation}\label{eq2.5} |\lambda| |y_q|  \leq
R_q^{(3)}(\mathcal{A}) |x_p|.\end{equation} Multiplying
(\ref{eq2.3}) with (\ref{eq2.5})yields \[|\lambda|^2  |x_p||y_q|
\leq R_p^{(1)}(\mathcal{A}) R_q^{(3)}(\mathcal{A}) |x_p||y_q|,\]
consequently,\begin{equation}\label{eq2.6} |\lambda|\leq \left(
R_p^{(1)}(\mathcal{A}) R_q^{(3)}(\mathcal{A})\right)^{\frac{1}{2}}.
\end{equation}

Note the facts that  $\lambda$ is a $C$-eigenvalue of $\mathcal{A}$
if and only if  $-\lambda$ is a $C$-eigenvalue of $\mathcal{A}$, and
that a $C$-eigenvalue is real. Then
\[ \lambda\in \left[-\left(
R_p^{(1)}(\mathcal{A})
R_q^{(3)}(\mathcal{A})\right)^{\frac{1}{2}},~\left(
R_p^{(1)}(\mathcal{A}) R_q^{(3)}(\mathcal{A})\right)^{\frac{1}{2}}
\right]\subseteq [-\rho,~\rho].\] The conclusion follows.
\end{proof}

From Theorem \ref{Thm_main1}, we can obtain easily the following
upper bound for the largest $C$-eigenvalue of a piezoelectric-type
tensor.

\begin{corollary} \label{Cor1}
Let $\mathcal{A}=(a_{ijk})\in \mathbb{R}^{n\times n \times n}$ be a
piezoelectric-type tensor, and $\lambda^*$ be the largest
$C$-eigenvalue of $\mathcal{A}$. Then
\[ \lambda^* \leq \rho.\]
\end{corollary}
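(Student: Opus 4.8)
The plan is to obtain this bound as an immediate specialization of Theorem~\ref{Thm_main1}, so that no new estimation is required. By definition $\lambda^*$ is the \emph{largest} $C$-eigenvalue of $\mathcal{A}$; in particular it is a $C$-eigenvalue of $\mathcal{A}$, and such an eigenvalue genuinely exists by the existence statement (Property~1). First I would invoke Theorem~\ref{Thm_main1} with the choice $\lambda=\lambda^*$. Since the hypotheses of the theorem ask only that $\lambda$ be a $C$-eigenvalue of a piezoelectric-type tensor $\mathcal{A}$, and $\lambda^*$ satisfies exactly this, the theorem applies verbatim and yields the containment
\[
\lambda^* \in [-\rho,\,\rho].
\]

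From this membership the desired inequality $\lambda^* \leq \rho$ follows at once by reading off the right endpoint of the interval. All of the analytic content—bounding $|\lambda|$ through the $p$-th component of $\mathcal{A}yy=\lambda x$ and the $q$-th component of $x\mathcal{A}y=\lambda y$, and then multiplying the two resulting inequalities~\eqref{eq2.3} and~\eqref{eq2.5} to get~\eqref{eq2.6}—has already been discharged inside the proof of Theorem~\ref{Thm_main1} and need not be reproduced. I therefore do not expect any genuine obstacle here: the single point that must be verified is that $\lambda^*$ legitimately qualifies as a $C$-eigenvalue, so that Theorem~\ref{Thm_main1} is applicable to it, and this is immediate from the definition of $\lambda^*$ as the largest element of the (nonempty) set of $C$-eigenvalues of $\mathcal{A}$.
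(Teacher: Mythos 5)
Your proposal is correct and matches the paper's (implicit) argument exactly: the corollary is stated as an immediate consequence of Theorem~\ref{Thm_main1}, obtained by applying that theorem to $\lambda=\lambda^*$ and reading off the right endpoint of $[-\rho,\rho]$. Your additional remark that $\lambda^*$ exists and is itself a $C$-eigenvalue (via Property~1) is the only point needing verification, and you handle it correctly.
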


Next we give another interval to locate all $C$-eigenvalues of a
piezoelectric-type tensor. Before that some notation are given. For
a subset $S$ of $N$, denote
\[ \Delta_S:=\{(i,j): i \in S~ or ~j\in S \}\]
and
\[ \overline{\Delta}_S:=\{(i,j): i \notin S ~and ~j\notin S \}.\]
Given a piezoelectric-type tensor $\mathcal{A}=(a_{ijk})\in
\mathbb{R}^{n\times n \times n}$, let
\[R_j^{\Delta_S,(3)}(\mathcal{A})= \sum\limits_{(l,k)\in \Delta_S} |a_{lkj}|,
R_j^{\overline{\Delta}_S,(3)}(\mathcal{A})= \sum\limits_{(l,k)\in
\overline{\Delta}_S} |a_{lkj}|,\] where
$R_j^{\Delta_S,(3)}(\mathcal{A})=0$ if $S=\emptyset$, and
$R_j^{\overline{\Delta}_S,(3)}(\mathcal{A})=0$ if $S=N$. Obviously,
$R_j^{(3)}(\mathcal{A})=R_j^{\Delta_S,(3)}(\mathcal{A})+R_j^{\overline{\Delta}_S,(3)}(\mathcal{A})$
for each $j\in N$.

\begin{thm} \label{Thm_main2}
Let $\mathcal{A}=(a_{ijk})\in \mathbb{R}^{n\times n \times n}$ be a
piezoelectric-type tensor, and $\lambda$ be a $C$-eigenvalue of
$\mathcal{A}$. And let $S$ be a subset of $N$. Then
\begin{equation}\label{eqn2.1} \lambda \in  [-\rho_S,~\rho_S], \end{equation}
where
\[ \rho_S := \max\limits_{i,j\in N} \frac{1}{2} \left( R_j^{\Delta_S,(3)}(\mathcal{A})+\left( (R_j^{\Delta_S,(3)}(\mathcal{A}))^2
+4R_i^{(1)}(\mathcal{A}) R_j^{\overline{\Delta}_S,(3)}(\mathcal{A})
\right)^{\frac{1}{2}} \right ).\]

Furthermore,
\begin{equation}\label{eqnn2.1} \lambda \in  [-\rho_{min},~\rho_{min}], \end{equation}
where $\rho_{min} :=\min\limits_{S\subseteq N} \rho_S $.
\end{thm}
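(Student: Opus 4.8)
The plan is to follow the architecture of the proof of Theorem \ref{Thm_main1}, retaining its first inequality verbatim while refining the estimate of the second $C$-eigenvalue equation via the partition of the index set into $\Delta_S$ and $\overline{\Delta}_S$. First I would fix left and right $C$-eigenvectors $x,y$ with $x^Tx=y^Ty=1$, set $|x_p|=\max_{i\in N}|x_i|$ and $|y_q|=\max_{i\in N}|y_i|$, and note $0<|x_p|\le 1$ and $0<|y_q|\le 1$. Applying the $p$-th row of $\mathcal{A}yy=\lambda x$ exactly as in \eqref{eq2.2}--\eqref{eq2.3} gives the unchanged bound $|\lambda||x_p|\le R_p^{(1)}(\mathcal{A})|y_q|$, which I would call $(\star)$.

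The crux is the $q$-th row of $x\mathcal{A}y=\lambda y$, namely \eqref{eq2.4}. Here I would split the sum $\sum_{i,j\in N}a_{ijq}x_iy_j$ into the part over $(i,j)\in\Delta_S$ and the part over $(i,j)\in\overline{\Delta}_S$, and estimate the two parts \emph{asymmetrically}: on $\Delta_S$ bound $|x_i|\le|x_p|\le 1$ together with $|y_j|\le|y_q|$ to obtain a contribution $\le R_q^{\Delta_S,(3)}(\mathcal{A})|y_q|$, while on $\overline{\Delta}_S$ bound $|y_j|\le|y_q|\le 1$ together with $|x_i|\le|x_p|$ to obtain $\le R_q^{\overline{\Delta}_S,(3)}(\mathcal{A})|x_p|$. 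Combining these with \eqref{eq2.4} and rearranging yields
\[
\bigl(|\lambda|-R_q^{\Delta_S,(3)}(\mathcal{A})\bigr)|y_q|\le R_q^{\overline{\Delta}_S,(3)}(\mathcal{A})|x_p|,
\]
which I would label $(\star\star)$.

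If $|\lambda|\le R_q^{\Delta_S,(3)}(\mathcal{A})$ the conclusion is immediate, since $\rho_S\ge R_q^{\Delta_S,(3)}(\mathcal{A})$ (the square root appearing in $\rho_S$ always dominates $R_j^{\Delta_S,(3)}(\mathcal{A})$). Otherwise both sides of $(\star\star)$ are nonnegative, so I may multiply $(\star)$ and $(\star\star)$ and cancel the positive factor $|x_p||y_q|$ to reach the scalar quadratic inequality
\[
|\lambda|^2-R_q^{\Delta_S,(3)}(\mathcal{A})|\lambda|-R_p^{(1)}(\mathcal{A})R_q^{\overline{\Delta}_S,(3)}(\mathcal{A})\le 0.
\]
Solving for the positive root and maximizing over $p,q\in N$ gives $|\lambda|\le\rho_S$; the symmetry $\lambda\leftrightarrow-\lambda$ from Property \ref{pro2} then upgrades this to $\lambda\in[-\rho_S,\rho_S]$, proving \eqref{eqn2.1}. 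Finally, since this holds for every $S\subseteq N$, intersecting the intervals over all such $S$ yields $\lambda\in[-\rho_{min},\rho_{min}]$, which is \eqref{eqnn2.1}. I expect the main obstacle to be the asymmetric bounding in the split of \eqref{eq2.4}: one must choose which factor in each product to collapse into the row sum and which to retain, so that the two parts produce terms proportional to $|y_q|$ and $|x_p|$ respectively, and so that multiplication by $(\star)$ legitimately (that is, with nonnegative factors throughout) reconstitutes precisely the quadratic whose positive root equals $\rho_S$.
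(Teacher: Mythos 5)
Your proposal is correct and follows essentially the same route as the paper: the unchanged bound \eqref{eq2.3}, the split of the $q$-th equation of $x\mathcal{A}y=\lambda y$ over $\Delta_S$ and $\overline{\Delta}_S$ to reach \eqref{eq2.7}, multiplication of the two inequalities, and solution of the resulting quadratic. Your explicit treatment of the case $|\lambda|\leq R_q^{\Delta_S,(3)}(\mathcal{A})$, which guarantees both factors are nonnegative before multiplying, is a small point of rigor the paper's proof passes over silently.
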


\begin{proof} Similarly to the proof of Theorem \ref{Thm_main1},
(\ref{eq2.3}) and (\ref{eq2.4}) hold. Furthermore, by (\ref{eq2.4})
we have
\begin{eqnarray*} |\lambda| |y_q| &\leq& \sum\limits_{i,j\in N} |a_{ijq}| |x_p|
|y_q|\\
& = & R_q^{(3)}(\mathcal{A})|x_p| |y_q|\\&=&
\left(R_q^{\Delta_S,(3)}(\mathcal{A})+R_q^{\overline{\Delta}_S,(3)}(\mathcal{A})\right)|x_p|
|y_q|\\&\leq& R_q^{\Delta_S,(3)}(\mathcal{A})
|y_q|+R_q^{\overline{\Delta}_S,(3)}(\mathcal{A})|x_p|
\end{eqnarray*}
Hence \begin{equation}\label{eq2.7}
\left(|\lambda|-R_q^{\Delta_S,(3)}(\mathcal{A})\right) |y_q|  \leq
R_q^{\overline{\Delta}_S,(3)}(\mathcal{A})|x_p|.\end{equation}
Multiplying (\ref{eq2.3}) with (\ref{eq2.7}) yields \[|\lambda|
\left(|\lambda|-R_q^{\Delta_S,(3)}(\mathcal{A})\right) |x_p||y_q|
\leq R_p^{(1)}(\mathcal{A})
R_q^{\overline{\Delta}_S,(3)}(\mathcal{A}) |x_p||y_q|,\]
consequently,\begin{equation}\label{eq2.8} |\lambda|
\left(|\lambda|-R_q^{\Delta_S,(3)}(\mathcal{A})\right) \leq
R_p^{(1)}(\mathcal{A}) R_q^{\overline{\Delta}_S,(3)}(\mathcal{A}).
\end{equation}
Solving (\ref{eq2.8}) for $ |\lambda|$ gives
\[|\lambda|\leq \frac{1}{2} \left( R_q^{\Delta_S,(3)}(\mathcal{A})+\left( (R_q^{\Delta_S,(3)}(\mathcal{A}))^2
+4R_p^{(1)}(\mathcal{A}) R_q^{\overline{\Delta}_S,(3)}(\mathcal{A})
\right)^{\frac{1}{2}} \right ).\] By an analogous way of Theorem
\ref{Def-main}, we have
\begin{equation}\label{eq2.9}  \lambda \in [-\rho_S,~\rho_S]. \end{equation}

Furthermore,  since (\ref{eq2.9}) holds for any $S\subseteq N$, it
follows that
\[ \lambda \in \bigcap\limits_{S\subseteq N } [-\rho_S,~\rho_S]=
\left[-\min\limits_{S\subseteq N}\rho_S, ~ \min\limits_{S\subseteq
N}\rho_S \right]= [-\rho_{min},~\rho_{min}].\] The conclusion
follows.
\end{proof}

Note here that if $S=\emptyset$, then
$R_j^{\Delta_S,(3)}(\mathcal{A})=0$ and
$R_j^{\overline{\Delta}_S,(3)}(\mathcal{A})= R_j^{(3)}(\mathcal{A})$
for any $j\in N$, which implies
\[\frac{1}{2} \left( R_j^{\Delta_S,(3)}(\mathcal{A})+\left( (R_j^{\Delta_S,(3)}(\mathcal{A}))^2
+4R_i^{(1)}(\mathcal{A}) R_j^{\overline{\Delta}_S,(3)}(\mathcal{A})
\right)^{\frac{1}{2}} \right ) =\left( R_i^{(1)}(\mathcal{A})
R_j^{(3)}(\mathcal{A}) \right)^{\frac{1}{2}}\]  consequently,
\[\rho_S= \rho.\]
Hence,
\[\rho_{min} = \min\limits_{S \subseteq N}  \rho_S \leq \rho.\] This
gives the comparison of the intervals in Theorem \ref{Thm_main1} and
Theorem \ref{Thm_main2} as follows.

\begin{thm} \label{Com}
Let $\mathcal{A}=(a_{ijk})\in \mathbb{R}^{n\times n \times n}$ be a
piezoelectric-type tensor, and $\lambda$ be a $C$-eigenvalue of
$\mathcal{A}$. Then
\[\lambda \in [-\rho_{min},~\rho_{min}]\subseteq [-\rho,~\rho],\]
where $ \rho$ is defined in Theorem \ref{Thm_main1}, and $\rho_{min}$ is defined in Theorem \ref{Thm_main2}.
\end{thm}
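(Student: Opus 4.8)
The plan is to obtain this comparison directly from the two preceding theorems, treating it essentially as a consequence of Theorem~\ref{Thm_main2} together with a single evaluation of $\rho_S$ at a distinguished subset $S$. The membership $\lambda \in [-\rho_{min},~\rho_{min}]$ is already furnished by Theorem~\ref{Thm_main2}, so the entire content of the claim reduces to the set inclusion $[-\rho_{min},~\rho_{min}] \subseteq [-\rho,~\rho]$. Since both intervals are symmetric about the origin, this in turn reduces to the single scalar inequality $\rho_{min} \leq \rho$, and it is this inequality that I would make the target of the argument.

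To prove $\rho_{min} \leq \rho$, I would exhibit one particular subset $S \subseteq N$ whose associated bound $\rho_S$ equals $\rho$; because $\rho_{min}$ is by definition the minimum of $\rho_S$ over all subsets, producing a single $S$ with $\rho_S = \rho$ immediately forces $\rho_{min} \leq \rho$. The natural candidate is $S = \emptyset$. For this choice the definitions give $R_j^{\Delta_S,(3)}(\mathcal{A}) = 0$ and $R_j^{\overline{\Delta}_S,(3)}(\mathcal{A}) = R_j^{(3)}(\mathcal{A})$ for every $j \in N$, so the defining expression for $\rho_S$ degenerates: the leading summand $R_j^{\Delta_S,(3)}(\mathcal{A})$ vanishes and the discriminant under the square root collapses to $4R_i^{(1)}(\mathcal{A}) R_j^{(3)}(\mathcal{A})$. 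The prefactor $\tfrac{1}{2}$ then cancels the $\sqrt{4} = 2$, leaving precisely $\left( R_i^{(1)}(\mathcal{A}) R_j^{(3)}(\mathcal{A}) \right)^{\frac{1}{2}}$, whose maximum over $i, j \in N$ is exactly $\rho$. Hence $\rho_{\emptyset} = \rho$.

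With $\rho_{\emptyset} = \rho$ established, the chain $\rho_{min} = \min_{S \subseteq N} \rho_S \leq \rho_{\emptyset} = \rho$ is immediate, and the symmetric intervals nest as $[-\rho_{min},~\rho_{min}] \subseteq [-\rho,~\rho]$. Combining this inclusion with the membership supplied by Theorem~\ref{Thm_main2} completes the argument. I expect no genuine obstacle here: the only point demanding any care is the algebraic verification that the square-root formula degenerates correctly at $S = \emptyset$, which is purely routine once one observes that the cross term inside the discriminant drops out the moment $R_j^{\Delta_S,(3)}(\mathcal{A}) = 0$. Indeed, this computation is precisely the one carried out in the paragraph immediately preceding the statement, so the formal proof amounts to little more than recording that the interval in Theorem~\ref{Thm_main2} is never wider than the interval in Theorem~\ref{Thm_main1}.
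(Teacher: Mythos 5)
Your proposal is correct and follows exactly the paper's own argument: the paragraph preceding Theorem~\ref{Com} evaluates $\rho_S$ at $S=\emptyset$, finds $\rho_\emptyset=\rho$, and concludes $\rho_{min}\leq\rho$, with the membership $\lambda\in[-\rho_{min},\rho_{min}]$ supplied by Theorem~\ref{Thm_main2}. Nothing further is needed.
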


\begin{rmk}
Theorem \ref{Com} shows that the interval $[-\rho_{min},~\rho_{min}]
$ captures all $C$-eigenvalues of a piezoelectric-type tensor
precisely than the interval $[-\rho,~\rho]$, although $\rho_{min}$
needs more computations than $\rho $.
\end{rmk}

Similarly to Corollary \ref{Cor1}, we can obtain easily the
following upper bound for the largest $C$-eigenvalue of a
piezoelectric-type tensor by Theorem \ref{Thm_main2}.

\begin{corollary} \label{Cor2}
Let $\mathcal{A}=(a_{ijk})\in \mathbb{R}^{n\times n \times n}$ be a
piezoelectric-type tensor, and $\lambda^*$ be the largest
$C$-eigenvalue of $\mathcal{A}$. Then
\[ \lambda^* \leq \rho_{min}.\]
\end{corollary}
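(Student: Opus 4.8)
The plan is to obtain this corollary as an immediate consequence of Theorem~\ref{Thm_main2}. The key observation is that the largest $C$-eigenvalue $\lambda^*$ is, in particular, a $C$-eigenvalue of $\mathcal{A}$; its existence is guaranteed by the property (stated earlier in the excerpt) that a piezoelectric-type tensor always possesses $C$-eigenvalues together with associated left and right $C$-eigenvectors, and by Property~\ref{pro3} the maximum is attained.

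First I would apply Theorem~\ref{Thm_main2} to the specific $C$-eigenvalue $\lambda=\lambda^*$. Since that theorem asserts the localization $\lambda \in [-\rho_{min},~\rho_{min}]$ for \emph{every} $C$-eigenvalue of $\mathcal{A}$, it holds in particular for $\lambda^*$, giving $\lambda^* \in [-\rho_{min},~\rho_{min}]$. Reading off the upper endpoint of this interval then yields $\lambda^* \leq \rho_{min}$, which is exactly the claim.

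There is essentially no obstacle here: the entire content has already been established in Theorem~\ref{Thm_main2}, and the corollary merely specializes the two-sided bound to its right endpoint for the largest eigenvalue. The only point worth flagging for rigor is the well-definedness of $\lambda^*$, i.e.\ that a largest $C$-eigenvalue genuinely exists; this is supplied by the existence property and the characterization in Property~\ref{pro3}, so once those are invoked the proof is a single line.
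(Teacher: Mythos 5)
Your proposal is correct and coincides with the paper's own (implicit) argument: the paper derives Corollary~\ref{Cor2} exactly as an immediate specialization of Theorem~\ref{Thm_main2} to $\lambda=\lambda^*$, just as Corollary~\ref{Cor1} follows from Theorem~\ref{Thm_main1}. Your extra remark on the well-definedness of $\lambda^*$ via the existence property and Property~\ref{pro3} is a reasonable touch of rigor but does not change the route.
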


\section{Numerical examples}
In this section, we give some examples to show the results obtained above.
Consider the eight
piezoelectric tensors in \cite{Ch};

 (I) The piezoelectric tensor
$\mathcal{A}_{VFeSb}$ \cite{Ch,Jo}, with its entries
\[ a_{123}=a_{213}=a_{312}=-3.68180677,\]
and other elements are zeros;

 (II) The piezoelectric tensor
$\mathcal{A}_{SiO_2}$ \cite{Ch,Cu,Ha}, with its entries
\[ a_{111}=-a_{122}=-a_{212}=-0.13685,~and~a_{123}=-a_{213}=-0.009715,\]
and other elements are zeros;

 (III) The piezoelectric tensor
$\mathcal{A}_{Cr_2AgBiO_8}$ \cite{Ch,Jo}, with its entries
\[ a_{123}=a_{213}=-0.22163,~a_{113}=-a_{223}=2.608665,\]
\[ a_{311}=-a_{322}=0.152485,~and~a_{312}=-0.37153,\]
and other elements are zeros;

 (IV) The piezoelectric tensor
$\mathcal{A}_{RbTaO_3}$ \cite{Ch,Jo}, with its entries
\[ a_{113}=a_{223}=-8.40955,~a_{222}=-a_{212}=-a_{211}=-5.412525,\]
\[ a_{311}=a_{322}=-4.3031,~and~a_{333}=-5.14766,\]
and other elements are zeros;

(V) The piezoelectric tensor $\mathcal{A}_{NaBiS_2}$ \cite{Ch,Jo}, with its entries
\[ a_{113}=-8.90808,~a_{223}=-0.00842,~a_{311}=-7.11526,\]
\[ a_{322}=-0.6222,~and~a_{333}=-7.93831,\]
and other elements are zeros;

(VI) The piezoelectric tensor $\mathcal{A}_{LiBiB_2O_5}$ \cite{Ch,Jo},
with its entries
\[ a_{123}=2.35682,~a_{112}=0.34929,~a_{211}=0.16101,~a_{222}=0.12562,\]
\[ a_{233}=0.1361,~a_{213}=-0.05587,~a_{323}=6.91074,~and~a_{312}=2.57812,\]
and other elements are zeros;

(VII) The piezoelectric tensor $\mathcal{A}_{KBi_2F_7}$ \cite{Ch,Jo},
with its entries
\[ a_{111}=12.64393,~a_{122}=1.08802,~a_{133}=4.14350,~a_{123}=1.59052,\]
\[ a_{113}=1.96801,~a_{112}=0.22465,~a_{211}=2.59187,~a_{222}=0.08263,\]
\[ a_{233}=0.81041,~a_{223}=0.51165,~a_{213}=0.71432,~a_{212}=0.10570,\]
\[ a_{311}=1.51254,~a_{322}=0.68235,~a_{333}=-0.23019,~a_{323}=0.19013,\]
\[ a_{313}=0.39030,~and ~a_{312}=0.08381.\]

(VIII) The piezoelectric tensor $\mathcal{A}_{BaNiO_3}$ \cite{Ch,Jo},
with its entries
\[ a_{113}=a_{223}=0.038385,~a_{311}=a_{322}=6.89822,~and ~a_{333}=27.4628,\]
and other elements are zeros.

We  now use the intervals in Theorem \ref{Thm_main1} and Theorem
\ref{Thm_main2} to locate all $C$-eigenvalues of the eight tensors
above, see Table 1. It is easy to see that for any $C$-eigenvalue
$\lambda$,
\[ \lambda\in [-\rho_{min},~\rho_{min}] \subseteq [-\rho,~\rho].\]

 \hspace{-2cm} \noindent
\begin{tabular}[htbp]{c|c|c|c|c|c|c|c|c}\hline
                    & $\mathcal{A}_{VFeSb}$ &$\mathcal{A}_{SiO_2}$& $\mathcal{A}_{Cr_2AgBiO_8}$  &$\mathcal{A}_{RbTaO_3}$
                    & $\mathcal{A}_{NaBiS_2}$ & $\mathcal{A}_{LiBiB_2O_5}$   &$\mathcal{A}_{KBi_2F_7}$ &$\mathcal{A}_{BaNiO_3}$ \\\hline
        $\rho$       & 7.3636  & 0.2882  & 5.6606 &30.0911&17.3288&15.2911& 22.6896  & 38.8162   \\\hline
        $\rho_{min}$  & 7.3636  &  0.2834 &  5.6606 &23.5377&16.8548&12.3206&20.2351&   35.3787 \\\hline
        $\lambda^*$  & 4.2514  & 0.1375  & 2.6258 &12.4234&11.6674&7.7376&    13.5021   & 27.4628  \\\hline
\end{tabular}

\hspace{-1.cm} \noindent Table 1. The intervals $[-\rho,\rho]$ and
$[-\rho_{min},\rho_{min}]$, and $\lambda^*$ is the largest
$C$-eigenvalue.


\section*{Acknowledgements}
This work is partly supported by National Natural Science
Foundations of China (11601473) and CAS
"Light of West China" Program.







\end{document}